 \pdfoutput=1

\documentclass[
    paper       = A4,
    parskip     = half,
    draft,
    fontsize    = 11pt,
    paper       = A4,
    DIV         = classic,
    headsepline = true
]{scrartcl}

\synctex=1

\usepackage{tikz}

\usepackage[math]{cellspace}
\cellspacetoplimit 4pt
\cellspacebottomlimit 4pt

\newcommand{\name}[1]{\textsc{#1}}


\newcounter{enumintertext}
\makeatletter
\newcommand{\enumintertext}[1]{%
\setcounter{enumintertext}{\value{enum\romannumeral\the\@enumdepth}}
\end{enumerate}
#1
\begin{enumerate}
\setcounter{enum\romannumeral\the\@enumdepth}{\value{enumintertext}}%
}
\makeatother


\usepackage{amsmath, amsthm, amsfonts, amssymb}
\usepackage{mathtools}

\usepackage{amsopn}

\newcommand{\N}{\mathbb{N}}     
\newcommand{\C}{\mathbb{C}}     

\newcommand{\I}{\mathcal{I}}
\newcommand{\J}{\mathcal{J}}

\newcommand{\bfi}{{\boldsymbol{i}}}
\newcommand{\bfj}{{\boldsymbol{j}}}
\newcommand{\bfk}{{\boldsymbol{k}}}


\DeclareFontFamily{OT1}{pzc}{}
\DeclareFontShape{OT1}{pzc}{m}{it}{<-> s * [1.10] pzcmi7t}{}
\DeclareMathAlphabet{\mathpzc}{OT1}{pzc}{m}{it}

\newcommand{\e}{\mathrm{e}}
\renewcommand{\epsilon}{\varepsilon}
\newcommand{\const}{c}

\DeclarePairedDelimiter{\abs}{\lvert}{\rvert}
\DeclarePairedDelimiter{\norm}{\lVert}{\rVert}




\newcommand{\ceq}{\vcentcolon=}



\newcommand{\Sym}{\mathcal{S}}











\newcommand{\phdot}{\,\cdot\,}				




\DeclareMathOperator*{\schur}{\mathchoice
	{\vcenter{\hbox{\large\ensuremath\ast}}}
	{\vcenter{\hbox{\large\ensuremath\ast}}}
	{\vcenter{\hbox{\footnotesize\ensuremath\ast}}}
	{\vcenter{\hbox{\scriptsize\ensuremath\ast}}}
}



\DeclarePairedDelimiterX{\innerproduct}[2]{(}{)}{#1 \,\delimsize\vert\, #2}
\DeclarePairedDelimiterX{\dualpairing}[2]{\langle}{\rangle}{#1 , #2}

\DeclarePairedDelimiterX\set[1]{\{}{\}}{%
    \providecommand\given{\,\delimsize\vert\,}
    #1
}



\usepackage[utf8x]{inputenc}
\usepackage[ngerman,english]{babel}

\usepackage{ucs}

\makeatletter
    \initiate@active@char{"}
    \addto\extrasenglish{\languageshorthands{english}\bbl@activate{"}}
    \addto\noextrasenglish{\bbl@deactivate{"}}
    \declare@shorthand{english}{"~}{\nobreak\hbox{--}\bbl@allowhyphens}
\makeatother


\newtheorem{theorem}{Theorem}[section]

\newtheorem{lemma}[theorem]{Lemma}
\newtheorem{proposition}[theorem]{Proposition}

\newcommand\MoveEqRight[1][2]{\kern -#1em  &   \kern #1em}

\usepackage[numbers]{natbib}

\usepackage[
    final         = true,
    debug         = true
]{hyperref}

\hypersetup{
    pdftitle            = {Non-symmetric polarization},
    pdfauthor           = {Andreas Defant and Sunke Schlüters},
    pdfkeywords         = {Schur multipliers},
    pdfproducer         = {LaTeX /w hyperref}
}


\title{Non-symmetric polarization}
\author{Andreas Defant and Sunke Schlüters}

\begin{document}

    \maketitle

\begin{abstract}
    \noindent   Let $P$ be an $m$-homogeneous polynomial in $n$-complex variables $x_1, \dotsc, x_n$. Clearly, $P$ has a unique representation in the form
    \begin{equation*}
        P(x)= \sum_{1 \leq j_1 \leq \dotsc \leq j_m \leq n}
                c_{(j_1, \dotsc, j_m)}
                \,
                x_{j_1} \dotsb x_{j_m}
        \,,
    \end{equation*}
    and the $m$"~form 
    \begin{equation*}
        L_P(x^{(1)}, \dotsc, x^{(m)})= \sum_{1 \leq j_1 \leq \dotsc \leq j_m \leq n}
                c_{(j_1, \dotsc, j_m)}
                \, 
                x^{(1)}_{j_1} \dotsb x^{(m)}_{j_m}
    \end{equation*}
    satisfies $L_P(x,\dotsc, x) = P(x)$ for every $x\in\C^n$. We show that, although $L_P$ in general is non-symmetric, for a large class of reasonable norms $ \norm[]{ \phdot } $ on $\C^n$
    the  norm of $L_P$  on $(\C^n, \norm[]{ \phdot } )^m$ up
    to a logarithmic term $(\const \log n)^{m^2}$ can be estimated by the norm of $P$ on $ (\C^n, \norm[]{ \phdot } )$; here $\const \ge 1$ denotes a universal constant. Moreover, for the $\ell_p$"~norms $ \norm[]{ \phdot }_p$, $1 \leq p < 2$ the logarithmic term in the number $n$ of variables is even superfluous.
\end{abstract}


\section{Introduction}

It is well-known that for every $m$"~homogeneous polynomial $P : \C^n \to \C$ there is a unique symmetric $m$"~linear form $L : (\C^n)^m \to \C$ such that $L(x,\dotsc, x) = P(x)$ for all $x \in \C^n$. Uniqueness is an immediate consequence of the well-known \emph{polarization formula} (see e.g. \cite[Section~1.1]{D99}): For each $m$"~homogeneous polynomial $P : \C^n \to \C$ and each symmetric $m$"~form $L$ on $\C^n$ such that $P(x) = L(x,\dotsc, x)$ for every $x \in \C^n$, we have for every choice of $x^{(1)}, \dotsc, x^{(m)}\in \C^n$
\begin{equation*}
    L\big( x^{(1)}, \dotsc, x^{(m)} \big)
    = \frac{1}{2^m m!} \sum_{\epsilon_k = \pm 1}
        \epsilon_1 \dotsb \epsilon_m P\Big(
            \sum_{k=1}^m \epsilon_k x^{(k)}
        \Big)
        \, .
\end{equation*}
Moreover, as an easy consequence, for each norm $\norm{ \phdot }$ on $\C^n$
\begin{equation}
    \label{eq:polarization_formula_norm_upper}
    \sup_{\norm{x^{(k)}} \le 1}
        \abs[\big]{ L \big( x^{(1)}, \dotsc, x^{(m)} \big) }
    \le \e^m \cdot 
        \sup_{\norm{x}\le1}
        \abs[]{ P(x) }
        \, .
\end{equation}

Existence can be seen as follows: Every $m$"~homogeneous polynomial $P : \C^n \to \C$ has a unique representation of the form
\begin{equation*}
    P(x)= \sum_{1 \leq j_1 \leq \dotsc \leq j_m \leq n}
            c_{(j_1, \dotsc, j_m)}
            \,
            x_{j_1} \dotsb x_{j_m}
    \, .
\end{equation*}
A $m$"~form on $\C^n$ which is naturally associated to $P$ is given by
\begin{equation*}
    L_P(x^{(1)}, \dotsc, x^{(m)}) \ceq
        \sum_{1 \leq j_1 \leq \dotsc \leq j_m \leq n}
            c_{(j_1, \dotsc, j_m)}
            \, 
            x^{(1)}_{j_1} \dotsb x^{(m)}_{j_m}
        \, ,
\end{equation*}
and the symmetrization $\Sym L_P$, defined by
\begin{equation*}
    \Sym L_P \big( x^{(1)}, \dotsc, x^{(m)} \big)
        \ceq \frac{1}{m!} \sum_\sigma
            L_P \big(x^{(\sigma(1))}, \dotsc, x^{(\sigma(1))} \big)
        \, ,
\end{equation*}
where the sum runs over all $\sigma \in \Sigma_m$ (the set of all permutations of the first $m$ natural numbers), then is the unique symmetric $m$"~form satisfying $L(x,\dotsc, x) = P(x)$ for every $x \in \C^n$.

\bigskip

Note that $L_P$ is in general not symmetric. For an arbitrary non-symmetric multilinear form $L : (\C^n)^m \to \C$ and the associated polynomial $P(x) \ceq L(x,\dotsc, x)$ we have in general no estimate as in \eqref{eq:polarization_formula_norm_upper}. Take for example $L:(\C^n)^2 \to \C$ defined by $(x,y) \mapsto x_1 y_2 - x_2 y_1$. Then $P(x) = L(x,x) = 0$, but $L \not= 0$.

Our purpose is now to establish estimates as in \eqref{eq:polarization_formula_norm_upper} for the multilinear form $L_P$ instead of $\Sym L_P$. The norms $\norm{ \phdot }$ we consider on $\C^n$ are $1$"~unconditional, i.e. $x,y \in \C^n$ with $ \abs[]{ x_k } \le \abs[]{ y_k }$ for every $k$ implies $\norm{x} \le \norm[]{ y }$. Examples are the $\ell_p$"~norms $\norm{\phdot}_p$ for $1\le p \le \infty$.

 Our main result is the following:

\begin{theorem}
    \label{thm:mt_poly}
    There exists a universal constant $\const_1 \ge 1$ such that for every $m$"~homogeneous polynomial $P : \C^n \to \C$ and every $1$"~unconditional norm  $\norm{ \phdot }$ on $\C^n$
    \begin{equation}
        \label{eq:mt_1}
        \sup_{\norm{x^{(k)}}\le1}
            \abs[\big]{ L_P \big( x^{(1)}, \dotsc, x^{(m)} \big) }
        \le (\const_1 \log n)^{m^2} \cdot 
            \sup_{\norm{x}\le1}
            \abs[]{ P(x) }
            \, .
    \end{equation}

    Moreover, if $\norm{ \phdot } = \norm[]{ \phdot }_p$ for $1\le p < 2$, then there even is a constant $\const_2 = \const_2(p) \ge 1$ for which
    \begin{equation}
        \label{eq:mt_2}
        \sup_{\norm{x^{(k)}}\le1}
            \abs[\big]{ L_P \big( x^{(1)}, \dotsc, x^{(m)} \big) }
        \le \const_2^{m^2} \cdot 
            \sup_{\norm{x}\le1}
            \abs[]{ P(x) }
            \, .
    \end{equation}
\end{theorem}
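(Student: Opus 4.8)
The plan is to factor the passage from $P$ to the non-symmetric form $L_P$ through the symmetric form $\Sym L_P$, and to encode the difference between them as a single, $P$-independent Schur multiplier acting on coefficient tensors. Writing an $m$-form as its coefficient tensor $(T_{j_1\dots j_m})$, note that $L_P$ and $\Sym L_P$ carry the same data: both restrict to $P$ on the diagonal, and on an ordered index $\mathbf j=(j_1\le\dots\le j_m)$ the symmetric coefficient equals $\tfrac{\prod_v\mu_v!}{m!}\,c_{\mathbf j}$, where the $\mu_v$ are the multiplicities occurring in $\mathbf j$. Hence the coefficient tensor of $L_P$ arises from that of $\Sym L_P$ by entrywise multiplication with the fixed symbol
\[
    \phi(\mathbf k)=\frac{m!}{\prod_v\mu_v(\mathbf k)!}\,\chi_{\{k_1\le\dots\le k_m\}}(\mathbf k),
\]
i.e. $L_P=\Phi(\Sym L_P)$ for the Schur multiplier $\Phi$ with symbol $\phi$. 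Since the polarization estimate \eqref{eq:polarization_formula_norm_upper} gives $\sup_{\norm{x^{(k)}}\le1}|\Sym L_P(x^{(1)},\dots,x^{(m)})|\le\e^{m}\sup_{\norm{x}\le1}|P(x)|$, everything reduces to bounding the norm of $\Phi$ as a multiplier on $m$-forms over $(\C^n,\norm{\phdot})$: then $\sup|L_P|\le\norm{\Phi}\,\e^{m}\sup|P|$.

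Next I would reduce $\Phi$ to two-variable building blocks. Splitting the range of $\mathbf k$ according to its tie-pattern turns $\phi$ into a sum of at most $m^{m}$ terms, each carrying a constant multinomial factor $\le m!$ and each being, after inclusion–exclusion, a pointwise product over pairs $a<b$ of the two elementary symbols $\chi_{\{k_a\le k_b\}}$ and $\chi_{\{k_a=k_b\}}$. Pointwise multiplication of symbols corresponds to composition of multipliers, so multiplier norms are submultiplicative; and a multiplier depending only on $(k_a,k_b)$ has, by freezing the remaining $m-2$ variables at points of the unit ball, the same norm on $m$-forms as the corresponding two-variable multiplier on bilinear forms. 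Thus $\norm{\Phi}$ is controlled by $m^{m}\cdot m!$ times a product of at most $\binom{m}{2}$ two-variable factors. The diagonal symbol $\chi_{\{i=j\}}$ is a contraction: averaging a bilinear form over the torus action $x_i\mapsto \e^{\mathrm i\theta_i}x_i$, $y_j\mapsto \e^{-\mathrm i\theta_j}y_j$, which preserves every $1$-unconditional norm, reproduces exactly its diagonal part. So the whole estimate comes down to the two-variable triangular truncation $\chi_{\{i\le j\}}$, while the combinatorial prefactor $m^{m}\cdot m!\le\e^{\Oh(m^2)}$ and the polarization constant $\e^{m}$ are absorbed into the target factor $(\const\log n)^{m^2}$, respectively $\const^{m^2}$.

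The heart of the matter is therefore the norm of the two-variable triangular truncation. For a general $1$-unconditional norm I would run the classical dyadic argument: decompose $\{i\le j\}$ into the diagonal plus $\Oh(\log n)$ bit-levels $M_b=\{(i,j):i,j$ agree on the top bits and first differ at bit $b$, with $0$ in $i$ and $1$ in $j\}$. Each $M_b$ is a disjoint union, in both coordinates, of coordinate rectangles $A_p\times B_p$, and the same torus-averaging trick—now with one phase per block—shows that $\chi_{M_b}$ is a contraction. Summing the $\Oh(\log n)$ levels gives $\norm{\chi_{\{i\le j\}}}\le\const\log n$, which yields \eqref{eq:mt_1}. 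For $\norm{\phdot}=\norm{\phdot}_p$ with $1\le p<2$, the bilinear norm equals the operator norm $\norm{A}_{\ell_p\to\ell_{p'}}$ with $p\le2\le p'$, and on this class the triangular truncation is bounded independently of $n$ by a constant $\const(p)$; feeding $\const(p)$ in place of $\const\log n$ then gives \eqref{eq:mt_2}.

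I expect the genuine obstacle to be precisely this last, dimension-free bound for $1\le p<2$. At $p=2$ the truncation norm is of exact order $\log n$, so no soft interpolation or type/cotype slack emanating from $\ell_2$ can help; one must genuinely exploit the strict inequality $p<2$, equivalently $p\le 2\le p'$, to obtain a bound free of $n$, via the relevant Schur-multiplier / Riesz-projection boundedness for operators $\ell_p\to\ell_{p'}$. The remaining points—that freezing spectator variables costs nothing, that the diagonal and the dyadic-level multipliers are genuine contractions for \emph{every} $1$-unconditional norm, and that the set-partition bookkeeping really collapses into products of the two elementary two-variable symbols—are routine once the two-variable truncation estimate is secured.
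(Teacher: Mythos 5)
Your proposal is correct, and it reaches Theorem~\ref{thm:mt_poly} by a genuinely different route than the paper. Where you compress the entire passage from $\Sym L_P$ to $L_P$ into a single Schur multiplier $\Phi$ with symbol $\phi(\bfk)=\tfrac{m!}{\prod_v\mu_v(\bfk)!}\,\chi_{\{k_1\le\dotsb\le k_m\}}$, the paper never forms such a one-shot multiplier: it walks through the partial symmetrizations, proving $\Sym_{k-1}L_P=\mathfrak A^k\schur\Sym_kL_P$ (Proposition~\ref{prop:coeff_compare}), factoring $\mathfrak A^k$ into the $k-1$ triangular symbols $T^{u,k}$ and diagonal blocks $D^{q,k}$ (Lemma~\ref{lem:Ak_ingredients}), and iterating over $k=2,\dotsc,m$ (Theorem~\ref{thm:mt_mlform}); that iteration is what produces the exponent $2+3+\dotsb+m\le m^2$ on $\const_1\log n$. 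Your route buys two things. First, sharper constants: on ordered indices the tie pattern is determined by \emph{consecutive} comparisons, so $\phi$ expands into $2^{m-1}$ products of at most $m-1$ two-variable symbols with weights at most $m!$, and your argument actually yields a bound of order $m!\,\e^{\Oh(m)}\,(\const\log n)^{m-1}$, i.e. only $m-1$ logarithmic factors instead of $m^2$ (even your cruder count over all pairs $a<b$ stays within $(\const_1\log n)^{m^2}$, so the theorem follows either way). Second, more elementary two-variable estimates: your torus-averaging proof that $\chi_{\{i=j\}}$ is a Schur contraction, and your dyadic one-phase-per-block proof that $\chi_{\{i\le j\}}$ has multiplier norm $\Oh(\log n)$, work directly for every $1$-unconditional norm, whereas the paper proves \eqref{eq:I_n} by quoting Bennett's Theorem~4.3 together with $\pi_1(\ell_1^n\to\ell_\infty^n)=1$, quotes Kwapie{\'n}--Pe{\l}czy{\'n}ski for the $\ell_\infty$ bound \eqref{eq:T_n_log}, and then transfers both to general $1$-unconditional norms via the coordinatewise-multiplication trick in the proof of Lemma~\ref{lem:D^uv_T^uv}; your averaging argument is in substance the original Kwapie{\'n}--Pe{\l}czy{\'n}ski proof, so nothing is lost there. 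What the paper's organization buys in exchange is Theorem~\ref{thm:mt_mlform} itself, the step-by-step comparison of $\Sym_{k-1}L_P$ with $\Sym_kL_P$, which is of independent interest and which your one-shot argument does not recover. Finally, for \eqref{eq:mt_2} you and the paper incur the identical logical debt: the dimension-free bound for the triangle projection on bilinear forms over $\ell_p\times\ell_p$, $1\le p<2$ (equivalently on operators $\ell_p\to\ell_{p'}$), is exactly Bennett's Theorem~5.1 of \cite{B76}, which the paper also cites without reproving; and your remark that no soft argument emanating from $p=2$ could replace it is accurate, since at $p=2$ the truncation norm is of exact order $\log n$.
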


Bearing \eqref{eq:polarization_formula_norm_upper} in mind, it suffices to establish the inequality
\begin{equation*}
    \sup_{\norm{x^{(k)}}\le1}
            \abs[\big]{ L_P \big( x^{(1)}, \dotsc, x^{(m)} \big) }
    \le \const \cdot
        \sup_{\norm{x^{(k)}}\le1}
            \abs[\big]{ \Sym L_P \big( x^{(1)}, \dotsc, x^{(m)} \big) }
\end{equation*}
with a suitable constant $\const$. We will prove this inequality by iteration, based on the following theorem. For $1\le k \le n$ define the partial symmetrization $\Sym_k L_p : (\C^n)^m \to \C$ of $L_P$ by
\begin{equation*}
    \Sym_k L_P \big( x^{(1)}, \dotsc, x^{(m)} \big)
        \ceq \frac{1}{k!} \sum_{\sigma \in \Sigma_k}
                L_P\big(
                    x^{(\sigma(1))}, \dotsc, x^{(\sigma(k))},
                    x^{(k+1)}, \dotsc, x^{(m)}
                 \big)
    \, .
\end{equation*}

\begin{theorem}
    \label{thm:mt_mlform}
    There exists a universal constant $\const_1 \ge 1$ such that for every $m$"~homogeneous polynomial $P : \C^n \to \C$, every $1$"~unconditional norm $\norm{\phdot}$ on $\C^n$ and $1 \le k \le m$
    \begin{equation*}
        \sup_{\norm{x^{(k)}}\le1}
                \abs[\big]{ \Sym_{k-1} L_P \big( x^{(1)}, \dotsc, x^{(m)} \big) }
        \le (\const_1 \log n)^k \cdot
            \sup_{\norm{x^{(k)}}\le1}
                \abs[\big]{ \Sym_k L_P \big( x^{(1)}, \dotsc, x^{(m)} \big) }
            \, .
    \end{equation*}

    Moreover, if $\norm{ \phdot } = \norm[]{ \phdot }_p$ for $1\le p < 2$, then there even is a constant $\const_2 = \const_2(p) \ge 1$ for which
    \begin{equation*}
        \sup_{\norm{x^{(k)}}\le1}
                \abs[\big]{ \Sym_{k-1} L_P \big( x^{(1)}, \dotsc, x^{(m)} \big) }
        \le \const_2^k \cdot
            \sup_{\norm{x^{(k)}}\le1}
                \abs[\big]{ \Sym_k L_P \big( x^{(1)}, \dotsc, x^{(m)} \big) }
            \, .
    \end{equation*}
\end{theorem}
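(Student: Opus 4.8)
The plan is to identify the passage from the more symmetric form $\Sym_k L_P$ to the less symmetric $\Sym_{k-1}L_P$ with a single Schur multiplier acting on the first $k$ slots, and then to control that multiplier through two-dimensional triangular truncations.

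First I would freeze the variables $x^{(k+1)},\dotsc,x^{(m)}$ and view both $\Sym_{k-1}L_P$ and $\Sym_k L_P$ as $k$-linear forms in $x^{(1)},\dotsc,x^{(k)}$, with coefficient arrays $A_{(j_1,\dotsc,j_k)}$ and $B_{(j_1,\dotsc,j_k)}$ respectively. By construction $B$ is symmetric in all $k$ indices, while $A$ is symmetric only in $j_1,\dotsc,j_{k-1}$. Since slot $r$ of $L_P$ always carries the $r$-th smallest index, the index in slot $k$ of $\Sym_{k-1}L_P$ is forced to be the largest among the first $k$. A short multiplicity count on tuples with pairwise distinct entries then gives $A=k\,B$ on the region $\{\,j_k\ge\max(j_1,\dotsc,j_{k-1})\,\}$ and $A=0$ elsewhere; on the diagonals, where indices coincide, the factor is smaller, but the discrepancy is a diagonal multiplier of norm at most $1$ (coordinate restrictions are contractive for $1$-unconditional norms). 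Hence $\Sym_{k-1}L_P=M_\phi(\Sym_k L_P)$ for a Schur multiplier $M_\phi$ on the first $k$ slots whose symbol $\phi$ equals $k\,\mathbf 1[\,j_k\ge\max(j_1,\dotsc,j_{k-1})\,]$ off the diagonals.

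Next I would factor the symbol as $\mathbf 1[\,j_k\ge\max(j_1,\dotsc,j_{k-1})\,]=\prod_{i=1}^{k-1}\mathbf 1[\,j_i\le j_k\,]$. Each factor depends only on slots $i$ and $k$, so the associated multiplier is a two-slot triangular truncation tensored with the identity on the remaining slots; freezing the other slots shows that its norm on $k$-linear forms equals its norm as a bilinear truncation. Because composing Schur multipliers multiplies their symbols pointwise, $M_\phi$ is (up to the bounded diagonal correction) the product $k\,M_{\phi_1}\dotsm M_{\phi_{k-1}}$, and submultiplicativity yields $\norm{M_\phi}\le k\cdot T^{\,k-1}$, where $T$ denotes the norm of a single two-slot triangular truncation on $(\C^n,\norm{\phdot})^2$.

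Everything thus reduces to the engine estimate $T\le \const\log n$ for an arbitrary $1$-unconditional norm, and $T\le\Oh(1)$ for $\norm{\phdot}=\norm{\phdot}_p$ with $1\le p<2$. For the first I would decompose the triangular region $\{i\le j\}$ dyadically into $\Oh(\log n)$ levels, each a disjoint union of combinatorial rectangles $I\times J$ that are pairwise disjoint in both rows and columns; such a level is a disjoint union of rectangular blocks and hence has norm $\Oh(1)$ for every $1$-unconditional norm, so summing over levels costs $\Oh(\log n)$. For $\ell_p$ with $1\le p<2$ the bilinear norm over $\ell_p\times\ell_p$ is the operator norm $\ell_p\to\ell_{p'}$, on which triangular truncation is bounded uniformly in $n$. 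Combining, $\norm{M_\phi}\le k\,(\const\log n)^{k-1}$ (respectively $k\,\const_2^{\,k-1}$), and since $k\le\e^{\,k-1}$ the polynomial factor $k$ is absorbed into one further power after enlarging the constant, giving $(\const_1\log n)^k$ (respectively $\const_2^k$). \textbf{Main obstacle.} I expect the difficulty to lie entirely in this engine lemma: verifying that each dyadic level really has norm $\Oh(1)$ uniformly over all $1$-unconditional norms, and establishing the $n$-independent truncation bound on the $\ell_p$-scale for $p<2$. The structural reduction in the first two paragraphs is essentially bookkeeping once $\phi$ has been identified; its one delicate point is the careful treatment of the diagonals, which must be shown to contribute only multipliers of bounded norm.
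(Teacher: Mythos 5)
Your proof has the same skeleton as the paper's: the passage from $\Sym_k L_P$ to $\Sym_{k-1}L_P$ is implemented by a single Schur multiplier supported on the region $\max(i_1,\dotsc,i_{k-1})\le i_k$, whose triangular part factors as $\schur_{u=1}^{k-1}T^{u,k}$; submultiplicativity of the Schur norm (after freezing all but two slots) reduces everything to one bilinear triangular truncation, which is then handled by the main-triangle-projection bound of \cite{KP70} in the unconditional case and by Bennett's bound \cite{B76} for $\ell_p$, $1\le p<2$. Where you genuinely depart from the paper is the proof of the $\log$-engine, and your argument is correct: the strict triangle splits into $\log_2 n$ dyadic levels (plus the diagonal), each a union of rectangles $I_s\times J_s$ with the $I_s$ pairwise disjoint and the $J_s$ pairwise disjoint, and each level is a contractive multiplier for every $1$-unconditional norm, since $\sum_s A(P_{I_s}x,P_{J_s}y)$ is the average over independent signs $\epsilon_s=\pm1$ of $A\big(\sum_s\epsilon_s P_{I_s}x,\sum_t\epsilon_t P_{J_t}y\big)$, and sign changes combined with coordinate deletions are contractions (here $P_I$ is the coordinate projection onto $I$). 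This gives $\log_2(2n)$ for all $1$-unconditional norms at once, whereas the paper quotes \cite{KP70} for $\ell_\infty$ and transfers to general unconditional norms by composing the form with coordinatewise multiplications.

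The genuine gap is your treatment of the diagonals. On tuples with coincidences the exact symbol is $k/\abs[]{\set[]{u\le k \given i_u=i_k}}$, so the correction you must control is the multiplier $C$ with entries $1/\abs[]{\set[]{u\le k \given i_u=i_k}}$. This $C$ is not a coordinate restriction --- those are the multipliers $D^{u,k}$ with entries $1$ exactly when $i_u=i_k$, and they do have norm $1$ --- and entrywise bounds never control Schur norms, so ``norm at most $1$'' does not follow from what you say; it is in fact false in general. Already for $k=2$ one has $C=\mathbf 1-\tfrac12 D^{1,2}$, whose multiplier norm on bilinear forms over $\ell_\infty^n$ exceeds $1$ for large $n$ (its Schur-multiplier norm on $B(\ell_2^n)$ tends to $3/2$, and by Grothendieck's inequality the two multiplier norms differ by a factor smaller than $3/2$ over complex scalars). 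The repair is exactly the paper's Lemma~\ref{lem:Ak_ingredients}: for each $Q\subset\set[]{1,\dotsc,k}$, the indicator of the coincidence pattern $\set[]{u\le k\given i_u=i_k}=Q$ equals $\big(\schur_{q\in Q}D^{q,k}\big)\schur\big(\schur_{q\in Q^c}(\mathbf 1-D^{q,k})\big)$ and hence has Schur norm at most $2^{k-\abs[]{Q}}$; summing these pieces with the weights $k/\abs[]{Q}$ bounds the full diagonal correction, including your prefactor $k$, by $k3^k$. That factor is quantitatively harmless --- it is absorbed into $(\const_1\log n)^k$, resp.\ $\const_2^k$ --- but the step is logically necessary: without it your final bound $k\cdot T^{k-1}$ is unjustified on every index tuple with repeated entries, i.e.\ on the entire diagonal part of the multiplier.
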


The proofs require the theory of \name{Schur} multipliers, which was initiated by \citet{Schur_1911}. As a crucial tool we will use norm estimates for the \emph{main triangle projection} due to \citet{KP70} as well as \citet{B77} (see also \cite{ST12,ST2013} and \cite{Defant_2011}).


\section{Comparing coefficients}

A $m$"~linear form $L : (\C^n)^m \to \C$ is uniquely determined by its coefficients
\begin{equation*}
    c_\bfi(L) \ceq L(e_{i_1} , \dotsc, e_{i_m}) \, , \quad \bfi \in \I(n,m) \ceq \set[]{ 1,\dotsc,n  }^m
    \, ,
\end{equation*}
where $e_k$ denotes the $k$\textsuperscript{th} canonical basis vector in $\C^n$. With $L_\bfi : (\C^n)^m \to \C$ defined by $(x^{(1)}, \dotsc, x^{(m)}) \mapsto x^{(1)}_{i_1} \dotsm x^{(m)}_{i_m}$ we see at once that
\begin{equation}
    \label{eq:multilinear_coefficients}
    L = \sum_{\bfi \in \I(n,m)} c_\bfi(L) \, L_\bfi
    \, .
\end{equation}

The index set $\I(n,m)$ carries a natural equivalence relation: $\bfi, \bfj \in\I(n,m)$ are equivalent, notation $\bfi \sim \bfj$, if there exists a permutation $\sigma \in \Sigma_m$ of the first $m$ natural numbers such that $i_k = j_{\sigma(k)}$ for every $k$. The equivalence class of $\bfi \in \I(n,m)$ will be denoted by $[\bfi]$. It is easy to check that for every $\bfi \in \I(n,m)$ there exists a unique $\bfj \in \J(n,m) \ceq \set[]{ (j_1, \dotsc, j_m) \in \I(n,m) \given j_1 \le j_2 \le \dotsb \le j_m }$ such that $[\bfi] = [\bfj]$, respectively $\bfi \sim \bfj$. We will use the symbol $\bfi^\ast$ to denote this unique index $\bfj$. For $\bfi \in \I(n,m_1)$ and $\bfj \in \I(n,m_2)$ we write $(\bfi,\bfj) \in \I(n,m_1+m_2)$ for the concatenation of the two.

The main idea of the proofs is now to compare $c_\bfi(\Sym_k L_P)$ and $c_\bfi(\Sym_{k-1}L_P)$. For this let us compute $c_\bfi(\Sym_k L_P)$.

\begin{lemma}
    \label{lem:coeff_of_Sk}
    Let $P: \C^n \to \C$ be an $m$"~homogeneous polynomial and $\bfi \in \I(n,m)$. Then
    \begin{equation*}
        c_\bfi ( \Sym_k L_P )
        = \frac{c_{\bfi^\ast}(L_P)}{ \abs[]{ [(i_1,\dotsc,i_k)] } }
    \end{equation*}
    if $(i_{k+1}, \dotsc, i_m) \in \J(n,m-k)$ and $\max \{ i_1, \dotsc, i_k \} \le i_{k+1}$; and otherwise
    \begin{equation*}
        c_\bfi ( \Sym_k L_P ) = 0
        \, .
    \end{equation*}
\end{lemma}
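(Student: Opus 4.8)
The plan is to reduce everything to the coefficients of $L_P$ and then simply unwind the definition of the partial symmetrization. First I would record the coefficients $c_\bfj(L_P) = L_P(e_{j_1}, \dotsc, e_{j_m})$. Substituting the basis vectors into the defining sum for $L_P$ and using $(e_{j_k})_{l} = \delta_{j_k l}$, every summand vanishes except the one indexed by $(j_1, \dotsc, j_m)$ itself, and that one occurs only if $(j_1, \dotsc, j_m)$ already appears as a summation index, i.e. is non-decreasing. Hence $c_\bfj(L_P) = 0$ whenever $\bfj \notin \J(n,m)$, while for $\bfj \in \J(n,m)$ it equals the coefficient $c_{(j_1,\dotsc,j_m)}$ of $P$. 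This is the only structural input I need.

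Next I would expand, for a fixed $\bfi \in \I(n,m)$, directly from the definition of $\Sym_k L_P$:
\[
    c_\bfi(\Sym_k L_P)
    = \frac{1}{k!} \sum_{\sigma \in \Sigma_k}
        c_{(i_{\sigma(1)}, \dotsc, i_{\sigma(k)}, i_{k+1}, \dotsc, i_m)}(L_P)
    \, .
\]
By the first step a summand is nonzero precisely when its index is non-decreasing, which happens exactly when the following three conditions hold simultaneously: (a) the tail satisfies $(i_{k+1}, \dotsc, i_m) \in \J(n, m-k)$; (b) $\max\{i_1, \dotsc, i_k\} \le i_{k+1}$; and (c) the permutation $\sigma$ arranges $i_1, \dotsc, i_k$ in non-decreasing order. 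Conditions (a) and (b) are independent of $\sigma$, so if either fails then \emph{every} summand vanishes and $c_\bfi(\Sym_k L_P) = 0$, which is exactly the second assertion of the lemma.

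Assuming (a) and (b), it remains to count the $\sigma \in \Sigma_k$ obeying (c) and to identify the common value of the surviving summands. The map $\sigma \mapsto (i_{\sigma(1)}, \dotsc, i_{\sigma(k)})$ sends $\Sigma_k$ onto the equivalence class $[(i_1, \dotsc, i_k)]$ with all fibres of equal size $k!/\abs[]{ [(i_1, \dotsc, i_k)] }$; the sorting permutations are precisely the fibre over the unique non-decreasing representative, so there are $k!/\abs[]{ [(i_1, \dotsc, i_k)] }$ of them. For each such $\sigma$, conditions (a), (b), (c) make the concatenated index non-decreasing with the same multiset as $\bfi$, hence equal to $\bfi^\ast$, so its coefficient is $c_{\bfi^\ast}(L_P)$. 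Collecting terms yields
\[
    c_\bfi(\Sym_k L_P)
    = \frac{1}{k!} \cdot \frac{k!}{\abs[]{ [(i_1, \dotsc, i_k)] }} \cdot c_{\bfi^\ast}(L_P)
    = \frac{c_{\bfi^\ast}(L_P)}{\abs[]{ [(i_1, \dotsc, i_k)] }}
    \, ,
\]
as claimed (for $k = m$ the tail is empty, so (a) and (b) are vacuous). The one genuinely delicate point is this final counting identity, namely that the number of sorting permutations equals $k!/\abs[]{ [(i_1, \dotsc, i_k)] }$ and that each produces the single ordered representative $\bfi^\ast$; the rest is routine bookkeeping.
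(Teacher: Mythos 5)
Your proof is correct and takes essentially the same route as the paper's: both expand $c_\bfi(\Sym_k L_P)$ over $\sigma \in \Sigma_k$ using the fact that the coefficients of $L_P$ are supported on $\J(n,m)$, observe that a term can survive only when $(i_{\sigma(1)},\dotsc,i_{\sigma(k)},i_{k+1},\dotsc,i_m)$ is non-decreasing (forcing the stated case distinction), and count the sorting permutations as $k!/\abs{[(i_1,\dotsc,i_k)]}$. Your fibre argument even justifies that counting step, which the paper merely asserts.
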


\begin{proof}
    By definition we have 
    \begin{align*}
        c_\bfi( \Sym_k L_P )
        &= \Sym_k L_P( e_{i_1}, \dotsc, e_{i_m} ) \\
        &= \frac{1}{k!} \sum_{\sigma\in\Sigma_k}
            \sum_{\bfj \in \J(n,m)} c_\bfj(L_P)
            \, L_\bfj(
                e_{i_{\sigma(1)}}, \dotsc, e_{i_{\sigma(k)}},
                e_{i_{k+1}}, \dotsc, e_{i_m}
            ) \\
        &= \sum_{\bfj \in \J(n,m)} c_\bfj(L_P)
            \, 
            \frac{1}{k!} \sum_{\sigma\in\Sigma_k}
            L_\bfj(
                e_{i_{\sigma(1)}}, \dotsc, e_{i_{\sigma(k)}},
                e_{i_{k+1}}, \dotsc, e_{i_m}
            )
        \, .
    \end{align*}
    Now, $L_\bfj(
            e_{i_{\sigma(1)}}, \dotsc, e_{i_{\sigma(k)}},
            e_{i_{k+1}}, \dotsc, e_{i_m}
        )$ equals $1$ if $\bfj = ( i_{\sigma(1)}, \dotsc, i_{\sigma(k)}, i_{k+1}, \dotsc, i_m
        )$ and vanishes otherwise. Thus
    \begin{equation*}
        c_\bfi( \Sym_k L_P )
        = \frac{ c_{\bfi^\ast} (L_P) }{ k! } 
            \cdot \abs[\big]{ \set[]{ \sigma\in\Sigma_k \given ( i_{\sigma(1)}, \dotsc, i_{\sigma(k)}, i_{k+1}, \dotsc, i_m
            ) \in \J(n,m)  } } 
            \, .
    \end{equation*}

    If $(i_{k+1}, \dotsc, i_m) \not\in \J(n,m-k)$ or $\max \{ i_1, \dotsc, i_k \} > i_{k+1}$, then there doesn't exist any permutation $\sigma \in \Sigma_k$ for which $( i_{\sigma(1)}, \dotsc, i_{\sigma(k)}, i_{k+1}, \dotsc, i_m ) \in \J(n,m)$. If not, then there are
    \begin{equation*}
        \frac{k!}{ \abs[]{ [(i_1,\dotsc, i_k)] } }
    \end{equation*}
    many permutations $\sigma \in \Sigma_k$ for which $i_{\sigma(1)} \le i_{\sigma(2)} \le \dotsb \le i_{\sigma(k)}$.
\end{proof}

\begin{proposition}
    \label{prop:coeff_compare}
    Let $P: \C^n \to \C$ be an $m$"~homogeneous polynomial, $\bfi \in \I(n,m)$ and $k\in \set[]{ 2, \dotsc, m }$. Then
    \begin{equation*}
        c_\bfi(\Sym_{k - 1} L_P) = \frac{k}{ \abs[\big]{
                \set[]{ 1 \le u \le k \given i_u = i_k }
            } } \cdot c_\bfi( \Sym_k L_P )
    \end{equation*}
    provided $\max \set[]{ i_1, \dotsc, i_{k-1} } \le i_k$; and otherwise
    \begin{equation*}
        c_\bfi(\Sym_{k - 1} L_P) = 0 \cdot c_\bfi( \Sym_k L_P )
        \, .
    \end{equation*}
\end{proposition}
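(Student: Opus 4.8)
The plan is to apply Lemma~\ref{lem:coeff_of_Sk} twice and to compare the two outcomes. Applied as stated, it expresses $c_\bfi(\Sym_k L_P)$ through $c_{\bfi^\ast}(L_P)$ and the class size $\abs[]{ [(i_1, \dotsc, i_k)] }$, valid under the hypothesis
\[
    H_k \colon \quad (i_{k+1}, \dotsc, i_m) \in \J(n, m-k)
    \ \text{ and } \
    \max \set[]{ i_1, \dotsc, i_k } \le i_{k+1}.
\]
Applied with $k$ replaced by $k-1$, it expresses $c_\bfi(\Sym_{k-1} L_P)$ through the \emph{same} quantity $c_{\bfi^\ast}(L_P)$ and the class size $\abs[]{ [(i_1, \dotsc, i_{k-1})] }$, valid under
\[
    H_{k-1} \colon \quad (i_k, \dotsc, i_m) \in \J(n, m-k+1)
    \ \text{ and } \
    \max \set[]{ i_1, \dotsc, i_{k-1} } \le i_k.
\]
Because both formulas carry the identical factor $c_{\bfi^\ast}(L_P)$, once the relevant hypotheses are in force the quotient $c_\bfi(\Sym_{k-1} L_P)/c_\bfi(\Sym_k L_P)$ collapses to the ratio of the two class sizes, and the whole proposition follows from a combinatorial identity together with a careful comparison of $H_k$ and $H_{k-1}$.

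The combinatorial core is the identity
\[
    \frac{\abs[]{ [(i_1, \dotsc, i_k)] }}{\abs[]{ [(i_1, \dotsc, i_{k-1})] }}
    = \frac{k}{r}, \qquad
    r \ceq \abs[]{ \set[]{ 1 \le u \le k \given i_u = i_k } }.
\]
I would prove this by writing each class size as a multinomial coefficient. If the distinct values among $i_1, \dotsc, i_k$ occur with multiplicities $r = r_0, r_1, \dotsc, r_t$, where $r_0$ is the multiplicity of the value $i_k$, then $\abs[]{ [(i_1, \dotsc, i_k)] } = k! / (r_0! \dotsm r_t!)$. Deleting the last entry $i_k$ leaves $k-1$ entries and lowers the multiplicity of $i_k$ from $r_0$ to $r_0 - 1$ while keeping the others fixed, so $\abs[]{ [(i_1, \dotsc, i_{k-1})] } = (k-1)! / ((r_0-1)!\, r_1! \dotsm r_t!)$. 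Dividing the two gives $k/r_0 = k/r$.

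It remains to reconcile $H_k$ and $H_{k-1}$, which is exactly what produces the case distinction. The key observation is that $(i_k, \dotsc, i_m) \in \J(n, m-k+1)$ holds precisely when $(i_{k+1}, \dotsc, i_m) \in \J(n, m-k)$ and $i_k \le i_{k+1}$. Assume first $\max \set[]{ i_1, \dotsc, i_{k-1} } \le i_k$; then $\max \set[]{ i_1, \dotsc, i_k } = i_k$, so $H_k$ becomes the conjunction of $(i_{k+1}, \dotsc, i_m) \in \J(n,m-k)$ and $i_k \le i_{k+1}$, while $H_{k-1}$ reduces to $(i_k, \dotsc, i_m) \in \J(n,m-k+1)$, its max-condition being the standing assumption. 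By the observation these two are equivalent, so $c_\bfi(\Sym_k L_P)$ and $c_\bfi(\Sym_{k-1} L_P)$ are simultaneously given by the lemma's formula, and the quotient identity yields the factor $k/r$; when both hypotheses fail, both coefficients vanish and the claimed equation reads $0 = (k/r)\cdot 0$. If instead $\max \set[]{ i_1, \dotsc, i_{k-1} } > i_k$, the max-condition in $H_{k-1}$ fails, forcing $c_\bfi(\Sym_{k-1} L_P) = 0$, which is the second alternative. I expect the only genuine obstacle to be this bookkeeping — in particular verifying that, under $\max \set[]{ i_1, \dotsc, i_{k-1} } \le i_k$, the hypothesis $H_{k-1}$ fails exactly when $H_k$ fails (including the degenerate case $k = m$, where the conditions on $i_{k+1}$ are vacuous), so that the two coefficients always vanish together.
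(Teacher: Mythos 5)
Your proof is correct and takes essentially the same route as the paper: both apply Lemma~\ref{lem:coeff_of_Sk} to $\Sym_k L_P$ and $\Sym_{k-1} L_P$ and compare the outcomes, with your hypothesis bookkeeping ($H_k$ versus $H_{k-1}$) matching the paper's three-case table, and your multinomial-coefficient identity being exactly the paper's Lemma~\ref{lem:card_is}, proved the same way.
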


For the proof we need an additional lemma.
\begin{lemma}
    \label{lem:card_is}
    For every $\bfi \in \I(n, k)$
    \begin{equation*}
        \abs[]{ [\bfi] }
        = \abs[]{ [(i_1,\dotsc,i_{k-1})] }
            \cdot \frac{k}{ \abs[]{ \set[]{ 1\le  u \le k \given i_u = i_k } } }
            \, .
    \end{equation*}
\end{lemma}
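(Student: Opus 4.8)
The plan is to identify $\abs[]{[\bfi]}$ with a multinomial coefficient and then simply compute the ratio of the two such coefficients appearing in the claim. First I would record that $[\bfi]$ is precisely the set of distinct rearrangements of the tuple $(i_1, \dotsc, i_k)$. Writing $\mu_v \ceq \abs[]{\set[]{1\le u \le k \given i_u = v}}$ for the multiplicity with which a value $v$ occurs among $i_1, \dotsc, i_k$, the number of such rearrangements is the multinomial coefficient
\begin{equation*}
    \abs[]{[\bfi]} = \frac{k!}{\prod_v \mu_v!}\,,
\end{equation*}
the product running over the distinct values $v$ occurring in $\bfi$. One may see this directly, or via the orbit--stabilizer theorem: $\Sigma_k$ acts on $\I(n,k)$ by permuting coordinates, the orbit of $\bfi$ is exactly $[\bfi]$, and its stabilizer is the product $\prod_v \Sigma_{\mu_v}$ of symmetric groups on the blocks of equal entries, of order $\prod_v \mu_v!$.

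Next I would abbreviate $a \ceq \abs[]{\set[]{1\le u \le k \given i_u = i_k}} = \mu_{i_k}$, the multiplicity of the last entry $i_k$. The key observation is that deleting the last coordinate turns $(i_1, \dotsc, i_k)$ into $(i_1, \dotsc, i_{k-1})$, which removes exactly one occurrence of the value $i_k$ and leaves every other multiplicity untouched. Hence the multiplicity of $i_k$ drops from $a$ to $a-1$, all other $\mu_v$ are unchanged, and the length drops from $k$ to $k-1$, so that
\begin{equation*}
    \abs[]{[(i_1,\dotsc,i_{k-1})]} = \frac{(k-1)!}{(a-1)!\,\prod_{v\ne i_k}\mu_v!}\,.
\end{equation*}

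Finally I would take the quotient of the two displays; every factor $\mu_v!$ with $v \ne i_k$ cancels, leaving
\begin{equation*}
    \frac{\abs[]{[\bfi]}}{\abs[]{[(i_1,\dotsc,i_{k-1})]}}
    = \frac{k!/a!}{(k-1)!/(a-1)!}
    = \frac{k}{a}\,,
\end{equation*}
which is exactly the asserted identity after multiplying both sides by $\abs[]{[(i_1,\dotsc,i_{k-1})]}$. There is no genuine obstacle here; the only point demanding care is the bookkeeping in the middle step, namely the verification that removing the last coordinate decreases precisely the multiplicity of $i_k$ by one and affects nothing else, so that the two multinomial coefficients differ only in the factors $k!$ versus $(k-1)!$ and $a!$ versus $(a-1)!$.
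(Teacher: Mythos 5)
Your proof is correct and is essentially identical to the paper's: both express $\abs[]{[\bfi]}$ as the multinomial coefficient $k!/\prod_v \mu_v!$, observe that deleting the last coordinate decreases only the multiplicity of $i_k$ by one, and compute the resulting ratio $k/a$. The orbit--stabilizer justification you offer is just a slightly more formal packaging of the paper's \emph{easy combinatorial argument} for the same formula.
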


\begin{proof}
    Let us first examine the quantity $\abs{ [\bfi] }$ for $\bfi \in \I(n,k)$. An easy combinatorial argument shows that
    \begin{equation*}
        \abs[]{ [\bfi] } = \frac{k!}{\alpha_1! \alpha_2! \dotsm \alpha_n!}
        \, ,
    \end{equation*} 
    where $\alpha_l \ceq \abs[]{ \set[]{ 1 \le u \le k \given i_u = l } }$, $1\le l \le n$; note that the numerator counts all permutations of the first $k$ natural numbers and the denominator counts those permutations which give the same index.

    Let now $\beta_l \ceq \abs[]{ \set[]{ 1 \le u \le k -1 \given i_u = l } }$. Then $\alpha_l = \beta_l + 1$ for $l = i_k$ and $\alpha_l =\beta_l$ for all $l \not= i_k$. Thus
    \begin{equation*}
        \abs[]{ [\bfi] } 
        = \frac{k!}{\alpha_1! \alpha_2! \dotsm \alpha_n!}
        = \frac{(k-1)!}{\beta_1! \dotsm \beta_n!} \cdot \frac{k}{\alpha_{i_k}}
        = \abs[]{ [(i_1,\dotsc,i_{k-1})] }
            \cdot \frac{k}{ \abs[]{ \set[]{ 1\le u\le k \given i_u = i_k } } }
            \, .
            \qedhere
    \end{equation*}
\end{proof}

\begin{proof}[Proof of Proposition~\ref{prop:coeff_compare}]
    Let $\bfk \in \I(n,m)$. We decompose $\bfk = (\bfi, l, \bfj) \in \I(n,m)$ with $\bfi \in \I(n,k-1)$, $l \in \set[]{ 1, \dotsc, n } = \I(n,1)$, and $\bfj \in \I(n,m-k)$. Using Lemma~\ref{lem:coeff_of_Sk}, the following table distinguishes three cases for the $\bfk$\textsuperscript{th} coefficient of $\Sym_kL_P$ and $\Sym_{k-1} L_P$:
    \begin{center}
        \begin{tabular}{Sc Sc >{\hspace{1em}}Sc >{\hspace{2em}}Sc}
            &
            & $c_\bfk(\Sym_k L_P)$
            & $c_\bfk(\Sym_{k-1} L_P)$ \\
        \hline
            \emph{(1)}
            & \parbox{10em}{
                \centering
                $\bfj \in \J(n,m-k)$ \\
                $l \le j_1$ \\
                $\max \set[]{ i_1, \dotsc, i_{k-1} } \le l$
            }
            & $\displaystyle \frac{1}{ \abs[]{ [(\bfi,l)] } }
                                \cdot c_{\bfk^\ast}(L_P)$
            & $\displaystyle \frac{1}{ \abs[]{ [\bfi] } }
                                \cdot c_{\bfk^\ast}(L_P)$            \\
        \hline
            \emph{(2)}
            & \parbox{12em}{
                \centering
                $\bfj \in \J(n,m-k)$ \\
                $l \le j_1$ \\
                $l < \max \set[]{ i_1, \dotsc, i_{k-1} }  \le j_1$ 
            }
            & $\displaystyle \frac{1}{ \abs[]{ [(\bfi,l)] } }
                                \cdot c_{\bfk^\ast}(L_P)$
            & $0$                                       \\
        \hline
            \emph{(3)}
            & otherwise
            & $0$
            & $0$                                       \\
        \end{tabular}
    \end{center}

    In case \emph{(1)} we deduce by Lemma~\ref{lem:card_is}, as desired
    \begin{align*}
        c_\bfk(\Sym_{k-1} L_P)
        &= \frac{ c_{\bfk^\ast}(L_P) }{ \abs[]{ [\bfi] } } 
        = \frac{ \abs[]{ [(\bfi,l)] } }{ \abs[]{ [\bfi] } }
            \cdot \frac{ c_{\bfk^\ast}(L_P) }{ \abs[]{ [(\bfi,l)] } } 
        = \frac{ \abs[]{ [(\bfi,l)] } }{ \abs[]{ [\bfi] } }
            \cdot c_\bfk(\Sym_k L_P) \\
        \MoveEqRight[10]
        = \frac{ k }{ \abs[]{ \set[]{ 1 \le u \le k \given i_u = l }} }
            \cdot c_\bfk(\Sym_k L_P)
        \, ,
    \end{align*}
    and in the cases \emph{(2)} and \emph{(3)} the conclusion is evident.
\end{proof}


\section{Multidimensional and classical \name{Schur} multipliers}

Let $c_\bfi(A)$ denote the $\bfi$\textsuperscript{th} entry of a matrix $A \in \C^{\I(n,m)}$. For $A, B \in \C^{\I(n,m)}$ the ($m$"~dimensional) \name{Schur} product $A \schur B \in \C^{\I(n,m)}$ is defined by
\begin{equation*}
    c_\bfi( A \schur B ) \ceq c_\bfi(A) \cdot c_\bfi(B) \,.
\end{equation*}
Having \eqref{eq:multilinear_coefficients} in mind, the \name{Schur} product of a $m$"~form $L : (\C^n)^m \to \C$ and $A \in \C^{\I(n,m)}$ is given by
\begin{equation*}
    A \schur L \ceq \sum_{\bfi} \big( c_\bfi(A) \cdot c_\bfi(L) \big) \, L_\bfi
    \, .
\end{equation*}

Recall that by Proposition~\ref{prop:coeff_compare} for each $1\le k \le m$ we have $\Sym_{k - 1} L_P = \mathfrak A^k \schur \Sym_k L_P$, where $\mathfrak A^k \in \C^{\I(n,m)}$ is defined by
\begin{equation*}
    c_\bfi( \mathfrak A^k ) \ceq \frac{k}{ \abs[\big]{
            \set[]{ 1 \le u \le k \given i_u = i_k }
        } }
\end{equation*}
if $\max \set[]{ i_1, \dotsc, i_{k-1} } \le i_k$; and $c_\bfi(\mathfrak A^k ) \ceq 0$ otherwise. Let us decompose $\mathfrak A^k$ into the \name{Schur} product of more handily pieces. For $u,v \in \set[]{ 1,\dotsc, m  }$ let $D^{u,v}\in \C^{\I(n,m)}$ be defined by $c_\bfi(D^{u,v}) \ceq 1$ if $i_u = i_v$ and $c_\bfi(D^{u,v}) \ceq 0$ otherwise. Define furthermore $T^{u,v} \in \C^{\I(n,m)}$ by $c_\bfi(T^{u,v}) \ceq 1$ if $i_u \le i_v$ and $c_\bfi(T^{u,v}) \ceq 0$ if $i_u > i_v$.

With these definitions $\mathfrak A^k$ decomposes as follows.
\begin{lemma}
    \label{lem:Ak_ingredients}
    For $1 \le k \le m$ we have
    \begin{equation}
        \label{eq:matrixA}
        \mathfrak{A}^k =
            \Big( \schur_{u=1}^{k-1} T^{u,k} \Big)
            \schur \bigg(
                \sum_{u=1}^{k} \frac{k}{u} \cdot
                A^{k,u}
            \bigg)
    \end{equation}
    with
    \begin{equation*}
        A^{k,u} \ceq
                    \sum_{\substack{
                        Q \subset \set[]{ 1, \dotsc, k } \\
                        \abs[]{ Q } = u
                    }}
                    \Big( \schur_{q \in Q} D^{q,k} \Big)
                    \schur \Big( \schur_{q \in Q^c} (\mathbf{1} - D^{q,k}) \Big)
                \, ,
    \end{equation*}
    where $Q^c$ denotes the complement of $Q$ in $\set[]{ 1, \dotsc,  k }$ and $\mathbf{1} \in \C^{\I(n,m)}$ is defined by $c_\bfi(\mathbf 1)=1$ for all $\bfi$.
\end{lemma}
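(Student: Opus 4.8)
The plan is to prove \eqref{eq:matrixA} coefficientwise. Since the $m$"~dimensional \name{Schur} product is defined entrywise, two elements of $\C^{\I(n,m)}$ coincide if and only if all their coefficients $c_\bfi$ agree. Thus I would fix an arbitrary $\bfi \in \I(n,m)$ and show that the $\bfi$\textsuperscript{th} coefficient of the right-hand side of \eqref{eq:matrixA} equals the explicit value of $c_\bfi(\mathfrak{A}^k)$ given in its definition.

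For the triangular factor, the definition of $T^{u,k}$ together with the entrywise nature of $\schur$ gives
\[
    c_\bfi\Big( \schur_{u=1}^{k-1} T^{u,k} \Big) = \prod_{u=1}^{k-1} c_\bfi(T^{u,k}),
\]
and each factor is $1$ when $i_u \le i_k$ and $0$ otherwise. Hence this coefficient equals $1$ precisely when $i_u \le i_k$ for all $u \le k-1$, i.e. when $\max \set[]{ i_1, \dotsc, i_{k-1} } \le i_k$, and $0$ otherwise. This is exactly the indicator encoding the case distinction in the definition of $\mathfrak{A}^k$.

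Next I would analyse $A^{k,u}$. For fixed $\bfi$ set $Q_\bfi \ceq \set[]{ 1 \le q \le k \given i_q = i_k }$; note that $k \in Q_\bfi$, so $1 \le \abs[]{ Q_\bfi } \le k$. For a subset $Q \subset \set[]{ 1, \dotsc, k }$ the entrywise product defining its summand has $\bfi$\textsuperscript{th} coefficient equal to the product over $q \in Q$ of $c_\bfi(D^{q,k})$ times the product over $q \in Q^c$ of $c_\bfi(\mathbf 1 - D^{q,k})$; this is $1$ precisely when $i_q = i_k$ for every $q \in Q$ and $i_q \ne i_k$ for every $q \in Q^c$, that is, precisely when $Q = Q_\bfi$, and $0$ otherwise. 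Summing over all $Q$ of size $u$ therefore leaves at most one surviving term, so $c_\bfi(A^{k,u}) = 1$ if $\abs[]{ Q_\bfi } = u$ and $c_\bfi(A^{k,u}) = 0$ otherwise. Consequently
\[
    c_\bfi\Big( \sum_{u=1}^k \frac{k}{u} A^{k,u} \Big) = \frac{k}{\abs[]{ Q_\bfi }},
\]
since exactly the index $u = \abs[]{ Q_\bfi }$ contributes and the denominator is never zero.

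Finally, multiplying the two coefficients just computed (again entrywise) yields $\frac{k}{\abs[]{ \set[]{ 1 \le u \le k \given i_u = i_k } }}$ when $\max\set[]{ i_1, \dotsc, i_{k-1}} \le i_k$ and $0$ otherwise, which matches $c_\bfi(\mathfrak{A}^k)$ exactly. I expect no genuine difficulty here: the lemma is a pure coefficient computation. The one point requiring care is the role of the complementary factors $\mathbf 1 - D^{q,k}$ over $Q^c$, which force the matching set of positions to be \emph{exactly} $Q$ rather than merely to contain it; this is what makes the $A^{k,u}$ a disjoint decomposition of the index set according to the multiplicity of $i_k$ among $i_1, \dotsc, i_k$, and it also guarantees (via $k \in Q_\bfi$) that the denominator stays positive.
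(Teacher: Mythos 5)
Your proposal is correct and follows essentially the same route as the paper's own proof: a coefficientwise comparison in which the triangular factors $T^{u,k}$ produce the indicator of $\max\set[]{i_1,\dotsc,i_{k-1}} \le i_k$, and the sets $Q_\bfi = \set[]{1 \le q \le k \given i_q = i_k}$ single out exactly one surviving summand $Q = Q_\bfi$ in $A^{k,\abs[]{Q_\bfi}}$, yielding the factor $k/\abs[]{Q_\bfi}$. The only cosmetic difference is that the paper organizes the argument by cases (first disposing of indices with some $i_u > i_k$), whereas you multiply the two indicator computations at the end; the content is identical.
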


\begin{proof}
    Throughout the proof, we will denote the right-hand side of \eqref{eq:matrixA} by $A^k$. Let $\bfi \in \I(n,m)$. If there exists some $1 \le u \le k-1$ such that $i_u > i_k$, then we have by definition $c_\bfi(\mathfrak A^k) = 0$. On the other hand, in this case $c_\bfi(T^{u,k}) =0$ and thus $c_\bfi(A^k) = 0$.

    Assume now that $i_u \le i_k$ for all $1\le u \le k$. Then $c_\bfi(T^{u,k}) = 1$ for all $1\le u \le k-1$. With $Q_\bfi \ceq \set[]{ 1\le u \le k \given i_u = i_k }$ we check at once that
    \begin{equation*}
        c_\bfi\bigg(
            \Big( \schur_{q \in Q} D^{q,k} \Big)
                    \schur \Big( \schur_{q \in Q^c} (\mathbf{1} - D^{q,k}) \Big)
        \bigg) = \begin{cases}
            1 & \text{if $Q = Q_\bfi$,} \\
            0 & \text{if $Q \not= Q_\bfi$.}
        \end{cases}
    \end{equation*}
    Therefore $c_\bfi(A^{k,u})$ evaluates to $1$ if $u = \abs[]{ Q_\bfi }$ and vanishes otherwise. We have
    \begin{equation*}
        c_\bfi(A^k) = \frac{k}{\abs[]{ Q_\bfi }}
            = \frac{k}{ \abs[\big]{
                \set[]{ 1 \le u \le k \given i_u = i_k }
            } }
            = c_\bfi(\mathfrak A^k) \, .
            \qedhere
    \end{equation*} 
\end{proof}

\bigskip
We have seen that $D^{u,v}$ and $T^{u,v}$ are the building blocks of $\mathfrak A^k$ under \name{Schur} multiplication. In what follows we will investigate the \name{Schur} norms of these matrices.

For a given norm $ \norm[]{ \phdot } $ on $\C^n$ and $A\in\C^{\I(n,m)}$ we denote by $\mu^m_{\scriptscriptstyle \norm[]{ \phdot }}(A)$ the best constant $\const$ such that
\begin{equation*}
    \sup_{\norm[]{ x^{(k)} } \le 1}
            \abs[\big]{ A \schur L \big(x^{(1)}, \dotsc, x^{(m)} \big) }
        \le \const \cdot \sup_{\norm[]{ x^{(k)} } \le 1}
            \abs[\big]{ L \big(x^{(1)}, \dotsc, x^{(m)} \big) }
\end{equation*}
for any $m$"~form $L : (\C^n)^m \to \C$.

\begin{lemma}
    \label{lem:D^uv_T^uv}
    For every $n,m$, every $u,v \in \set[]{ 1,\dotsc, m}$, and every $1$"~unconditional norm $\norm{ \phdot} $ on $\C^n$
    \begin{gather}
        \label{eq:D^uv}
        \mu^m_{\scriptscriptstyle \norm[]{ \phdot }}(D^{u,v}) = 1
        \, , \\
        \label{eq:T^uv_log}
        \mu^m_{\scriptscriptstyle \norm[]{ \phdot }}( T^{u,v} ) \le \log_2(2n)
        \, .
    \end{gather}
    Moreover, for every $1 \le p < 2$ there exists a constant $\const_3= \const_3(p)$ so that for every $n,m$ and $u,v \in \set[]{ 1,\dotsc, m}$
    \begin{equation}
        \label{eq:T^uv_p}
        \mu^m_{\scriptscriptstyle \norm[]{ \phdot }_p}( T^{u,v} ) \le \const_3
        \, .
    \end{equation}
\end{lemma}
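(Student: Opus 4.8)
The plan is to reduce all three estimates to the classical two"~dimensional situation, in which $D^{u,v}$ becomes diagonal extraction and $T^{u,v}$ the \emph{main triangle projection} on $n\times n$ matrices. The whole reduction rests on the $1$"~unconditionality of $\norm[]{\phdot}$: both coordinate restrictions and coordinate"~wise multiplication by unimodular numbers do not increase the norm. Since $D^{u,v}$ and $T^{u,v}$ depend only on the indices in slots $u$ and $v$, I would fix arbitrary $x^{(w)}$ with $\norm{x^{(w)}}\le1$ for $w\notin\set[]{u,v}$ and view $L$ as a bilinear form $\beta(s,t)\ceq L(\dotsc,s,\dotsc,t,\dotsc)$ in arguments $u$ and $v$. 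Then $T^{u,v}\schur L$ (respectively $D^{u,v}\schur L$) evaluated at the fixed data equals $T\schur\beta$ (respectively the diagonal of $\beta$), where $T\in\C^{\I(n,2)}$ is the scalar truncation keeping $i\le j$. Taking the supremum first over $s,t$ and then over the fixed $x^{(w)}$ gives $\mu^m_{\scriptscriptstyle \norm[]{ \phdot }}(T^{u,v})\le\mu^2_{\scriptscriptstyle \norm[]{ \phdot }}(T)$, so it suffices to treat the bilinear case; the cases $u=v$ are trivial, since then $T^{u,v}=D^{u,v}=\mathbf 1$.

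For \eqref{eq:D^uv} I would argue directly on the $m$"~form. For $\omega\in\T^n$ let $D_\omega\in\Lin(\C^n)$ be the diagonal operator $D_\omega x\ceq(\omega_k x_k)_k$; by $1$"~unconditionality $\norm{D_\omega x}=\norm{x}$. Averaging slot $u$ against $D_\omega$ and slot $v$ against $D_{\conjugate\omega}$ over the normalized Haar measure of $\T^n$, the $\bfi$"~term of $L$ is multiplied by the factor $\int_{\T^n}\omega_{i_u}\conjugate{\omega_{i_v}}\differential\omega$, which equals $1$ if $i_u=i_v$ and vanishes otherwise, so that
\begin{equation*}
    \int_{\T^n}L\big(\dotsc,D_\omega x^{(u)},\dotsc,D_{\conjugate\omega}x^{(v)},\dotsc\big)\differential\omega
    =\big(D^{u,v}\schur L\big)\big(x^{(1)},\dotsc,x^{(m)}\big)\,.
\end{equation*}
Bounding the integrand by $\sup_{\norm{x^{(k)}}\le1}\abs{L}$ yields $\mu^m_{\scriptscriptstyle \norm[]{ \phdot }}(D^{u,v})\le1$; the reverse inequality follows from any nonzero $L$ supported on indices with $i_u=i_v$ (e.g. $L_\bfi$ with $\bfi=(1,\dotsc,1)$), which $D^{u,v}$ leaves unchanged.

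For the logarithmic bound \eqref{eq:T^uv_log} I would invoke the norm estimate for the main triangle projection of \citet{KP70} and \citet{B77}. The underlying mechanism, which also makes the dependence on $1$"~unconditionality transparent, is the dyadic decomposition of $\set[]{ (i,j)\given i\le j }$ into the diagonal together with $\Oh(\log n)$ \qml staircase\qmr{} layers, each of which is a union of rectangles $I_b\times J_b$ with the $I_b$ pairwise disjoint and the $J_b$ pairwise disjoint. Such a layer has Schur multiplier norm $\le1$: the substitutions $s\mapsto\sum_b\omega_b P_{I_b}s$ and $t\mapsto\sum_b\conjugate{\omega_b}P_{J_b}t$ (with $P_I$ the coordinate projection onto $I$) are norm"~nonincreasing, and integrating over $\omega\in\T^{\#\mathrm{blocks}}$ reproduces exactly that layer. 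Summing these $\Oh(\log n)$ layers already gives a bound of the correct order, and the sharp constant $\log_2(2n)$ is precisely what the cited estimates furnish.

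Finally, for \eqref{eq:T^uv_p} the same reduction identifies $\mu^2_{\scriptscriptstyle \norm[]{ \phdot }_p}(T)$ with the norm of the main triangle projection acting on the bilinear forms over $\ell_p^n$, that is, on the operators $\ell_p^n\to\ell_{p'}^n$ with $1/p+1/p'=1$. For $1\le p<2$ one has $p<2<p'$, and in this \qml subcritical\qmr{} range the main triangle projection is a bounded Schur multiplier \emph{uniformly in} $n$ by the theory of \citet{B77}; the endpoint $p=1$ is transparent, since there the bilinear norm is $\max_{i,j}\abs{\beta_{ij}}$, which truncation cannot increase. I expect this uniform"~in"~$n$ boundedness to be the crux: it is exactly the transition at $p=2$, where the norm genuinely grows like $\log n$, that separates \eqref{eq:T^uv_p} from \eqref{eq:T^uv_log}, so the argument must not interpolate down from the critical case $p=2$ but instead exploit the geometry of $\ell_p$ for $p<2$ directly.
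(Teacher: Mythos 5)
Your proof is correct, but for the two statements about a general $1$"~unconditional norm it takes a genuinely different route from the paper. The reduction to the bilinear case (freezing all slots except $u$ and $v$) is also the paper's first step, and for \eqref{eq:T^uv_p} the two arguments then coincide: reduce to bilinear forms on $\ell_p^n\times\ell_p^n$ and quote Bennett's uniform bound (the paper cites Theorem~5.1 of \cite{B76}, recorded as \eqref{eq:T_n_p}). The divergence is in how the unconditional"~norm bounds are obtained. The paper proves \eqref{eq:D^uv} and \eqref{eq:T^uv_log} first for $\norm{\phdot}_\infty$ only, by citation --- \eqref{eq:I_n} via Theorem~4.3 of \cite{B77} together with $\pi_1(\id : \ell_1^n\to\ell_\infty^n)=1$, and \eqref{eq:T_n_log} from \cite{KP70} --- and then transfers them to an arbitrary $1$"~unconditional norm by the multiplication trick $\tilde L\big(y^{(1)},\dotsc,y^{(m)}\big)\ceq L\big(x^{(1)}\cdot y^{(1)},\dotsc,x^{(m)}\cdot y^{(m)}\big)$, which maps $\ell_\infty$"~balls into the unit ball of $\norm{\phdot}$ precisely by unconditionality. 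You instead prove the unconditional statements directly: torus averaging for $D^{u,v}$ (needing neither the bilinear reduction nor any summing"~operator machinery), and for $T^{u,v}$ the dyadic staircase decomposition, each layer having Schur norm at most $1$ by the same averaging; both mechanisms are sound, since disjointness of the blocks makes the substituted vectors coordinatewise dominated by the originals, which is exactly where $1$"~unconditionality enters. What each approach buys: yours is self"~contained and makes the role of unconditionality transparent; the paper's transfer trick is a general principle --- any bilinear Schur"~multiplier bound over $\ell_\infty^n$ passes to every $1$"~unconditional norm with the \emph{same} constant --- so sharp constants from the literature can be quoted verbatim. That last point is the one blemish in your argument: the dyadic count gives $1+\lceil\log_2 n\rceil$ layers, hence $\lceil\log_2(2n)\rceil$ rather than $\log_2(2n)$, and your fallback citation \cite{KP70} covers only $\ell_\infty^n$, so for a general $1$"~unconditional norm the literal constant of \eqref{eq:T^uv_log} still requires the paper's transfer argument; this discrepancy is of course immaterial for Theorems~\ref{thm:mt_poly} and \ref{thm:mt_mlform}.
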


To prove this lemma we have to resort to the classical theory of \name{Schur} multipliers. Define $T_n = (t^n_{ij})_{i,j} \in \C^{n \times n}$ by
\begin{equation*}
     t^n_{ij} = \begin{cases}
        1 & i\le j \le n, \\
        0 & \text{otherwise,}
    \end{cases}
\end{equation*}
and let $I_n \in \C^{n \times n}$ denote the identity matrix.

\begin{lemma}
    \label{lem:I_n,T_n}
    We have for every $n$
    \begin{gather}
        \mu_{\scriptscriptstyle \norm[]{ \phdot }_\infty}^2(I_n)   \le 1 \, , \label{eq:I_n} \\
        \mu_{\scriptscriptstyle \norm[]{ \phdot }_\infty}^2(T_n) \le \log_2(2n) \, , \label{eq:T_n_log}
    \intertext{and, moreover, for $1 \le p < 2$ there is a constant $\const_3 = \const_3(p)$ such that for every $n$}
        \mu_{\scriptscriptstyle \norm[]{ \phdot }_p}^2(T_n)      \le \const_3 \, . \label{eq:T_n_p}
    \end{gather}
\end{lemma}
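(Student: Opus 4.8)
The plan is to reduce Lemma~\ref{lem:I_n,T_n} to classical two"~dimensional \name{Schur} multiplier estimates, to settle \eqref{eq:I_n} by hand, and to read off the two $T_n$"~bounds from the main triangle projection. First I would unwind what $\mu^2_{\scriptscriptstyle\norm[]{\phdot}_p}(A)$ means for a matrix $A\in\C^{n\times n}$. Identifying a bilinear form $L$ with its coefficient matrix $C=(c_{jk})$, $c_{jk}=L(e_j,e_k)$, and computing, for fixed $y$, the supremum over $x$, one obtains
\begin{equation*}
    \sup_{\norm[]{x}_p\le1,\,\norm[]{y}_p\le1}\abs[\big]{L(x,y)}
    =\sup_{\norm[]{y}_p\le1}\norm[\big]{Cy}_{p'}
    =\norm[\big]{C:\ell_p^n\to\ell_{p'}^n}
    \,,
\end{equation*}
where $1/p+1/p'=1$ (so $\ell_\infty^n\to\ell_1^n$ when $p=\infty$). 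Since the coefficient matrix of $A\schur L$ is the entrywise product of $A$ and $C$, the number $\mu^2_{\scriptscriptstyle\norm[]{\phdot}_p}(A)$ is exactly the norm of the \name{Schur} multiplication operator $C\mapsto A\schur C$ on $\Lin(\ell_p^n,\ell_{p'}^n)$; for $A=T_n$ this operator is the main triangle projection.

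For \eqref{eq:I_n} I would argue directly, extracting the diagonal by averaging over the torus. If $\norm[]{x}_\infty\le1$, $\norm[]{y}_\infty\le1$ and $z\in\T^n$, then by $1$"~unconditionality the vectors $z\cdot x\ceq(z_jx_j)_j$ and $\conjugate{z}\cdot y\ceq(\conjugate{z_j}y_j)_j$ again lie in the unit ball of $\ell_\infty^n$, so $\abs[\big]{L(z\cdot x,\conjugate{z}\cdot y)}\le\sup_{\norm[]{u}_\infty\le1,\,\norm[]{v}_\infty\le1}\abs[\big]{L(u,v)}$. Integrating against normalised \name{Haar} measure on $\T^n$ and using $\int_{\T^n}z_j\conjugate{z_k}\differential z=\delta_{jk}$ gives
\begin{equation*}
    \int_{\T^n}L(z\cdot x,\conjugate{z}\cdot y)\differential z
    =\sum_j c_{jj}\,x_jy_j
    =(I_n\schur L)(x,y)
    \,,
\end{equation*}
whence $\mu^2_{\scriptscriptstyle\norm[]{\phdot}_\infty}(I_n)\le1$. (This is the $m=2$ case of \eqref{eq:D^uv}, and the same computation yields that estimate.)

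For \eqref{eq:T_n_log} I would decompose the triangle dyadically. Writing $T_n=I_n+U_n$ with $U_n$ the strict upper triangular part, I partition $\set[]{(j,k)\given j<k}$ according to the most significant binary digit in which $j$ and the larger index $k$ differ; this produces at most $\lceil\log_2 n\rceil$ pieces, each of which is a union of pairwise row"~ and column"~disjoint rectangles. The indicator multiplier of such a family of disjoint rectangles $\bigcup_h A_h\times B_h$ has norm $1$: it acts as $(x,y)\mapsto\sum_h L(P_{A_h}x,P_{B_h}y)$ with coordinate projections $P_{A_h},P_{B_h}$, and the same phase"~averaging as in the previous step bounds it by $\sup\abs[]{L}$. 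Summing the pieces and adding \eqref{eq:I_n} gives a bound of order $\log_2 n$; careful bookkeeping recovers the sharp value $\log_2(2n)$ of \citet{KP70} for the main triangle projection on $\Lin(\ell_\infty^n,\ell_1^n)$. (Equivalently, $T_n\schur L(x,y)=\tfrac1{2\pi}\int_{-\pi}^{\pi}K(\theta)\,L(U_\theta^{-1}x,U_\theta y)\differential\theta$ for the diagonal isometry $U_\theta=\operatorname{diag}(\e^{\mathrm{i}j\theta})_j$ and a kernel $K$ with $\widehat K(m)=1$ for $0\le m\le n-1$ and $0$ for $-(n-1)\le m<0$, so that the multiplier norm is at most $\norm[]{K}_{L^1}\le\log_2(2n)$.)

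For \eqref{eq:T_n_p} the elementary arguments above no longer suffice. When $p=1$ one has $\norm[]{L}_1=\max_{j,k}\abs[]{c_{jk}}$, so the truncation trivially has norm $1$; but for $1<p<2$ the kernel (or decomposition) estimate only yields the dimension"~dependent $\log n$, and interpolating between $p=1$ (norm $1$) and $p=2$ (norm $\asymp\log n$) cannot remove the logarithm. The dimension"~free bound is precisely the content of \citet{B77}: for $1\le p<2$ the main triangle projection is bounded on $\Lin(\ell_p^n,\ell_{p'}^n)$ uniformly in $n$, the mechanism being that $\ell_p$ has cotype $2$ while $\ell_{p'}$ (with $p'>2$) has type $2$. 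I would simply invoke this, taking $\const_3=\const_3(p)$ to be the resulting constant. I expect this to be the only genuine obstacle: \eqref{eq:I_n} and \eqref{eq:T_n_log} are elementary once the statement is recast as a classical \name{Schur} multiplier question, whereas the $n$"~independent estimate \eqref{eq:T_n_p} rests on the harmonic"~analytic input of \name{Bennett}.
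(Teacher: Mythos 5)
Your proposal is correct in substance, but it takes a genuinely different route from the paper, which proves this lemma almost entirely by citation: the paper reads \eqref{eq:T_n_log} off Proposition~1.1 of \citet{KP70}, gets \eqref{eq:T_n_p} from Theorem~5.1 of Bennett's 1976 paper \cite{B76}, and derives \eqref{eq:I_n} from Theorem~4.3 of \cite{B77}, which identifies $\mu^2_{\scriptscriptstyle \norm[]{\phdot}_\infty}(I_n)$ with a supremum of $1$-summing norms $\pi_1\big(\ell_1^n \xrightarrow{\operatorname{diag} d} \ell_1^n \xrightarrow{I_n} \ell_\infty^n\big)$, finished via the ideal property of $\pi_1$ and $\pi_1(\ell_1^n \xrightarrow{I_n} \ell_\infty^n)=1$. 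You instead give self-contained arguments for the first two estimates: the torus-averaging identity $\int_{\T^n} L(z\cdot x, \conjugate{z}\cdot y)\differential z = (I_n \schur L)(x,y)$ is a clean elementary replacement for the absolutely-summing machinery (and, as you note, it is exactly the $m=2$ case of the argument the paper later uses to prove \eqref{eq:D^uv}), while your dyadic splitting of the strict upper triangle into $\lceil\log_2 n\rceil$ unions of row- and column-disjoint rectangles, each of multiplier norm $1$ by the same phase averaging, is sound and essentially reconstructs the Kwapie\'n--Pe\l czy\'nski proof. The one quantitative gap is that this count yields $1+\lceil\log_2 n\rceil$, which exceeds the stated bound $\log_2(2n)$ whenever $n$ is not a power of $2$; this is harmless for Theorems~\ref{thm:mt_poly} and \ref{thm:mt_mlform}, which only need a bound of order $\log n$, but to get the sharp constant you would in the end cite \citet{KP70}, exactly as the paper does. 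For \eqref{eq:T_n_p} you and the paper agree that Bennett's dimension-free theorem must simply be invoked (the paper attributes it to Theorem~5.1 of \cite{B76} rather than to \cite{B77}, and your type/cotype explanation is a plausible heuristic rather than the documented mechanism of his proof); your remark that interpolation between $p=1$ and $p=2$ cannot remove the logarithm is a good sanity check on why this input is unavoidable. In short: your route buys a proof of \eqref{eq:I_n} and \eqref{eq:T_n_log} readable without any outside references, at the cost of a slightly worse constant; the paper's route buys brevity and the sharp bound.
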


These inequalities are due to \citet{KP70} as well as \citet{B76}. More precisely, Proposition~1.1 of \cite{KP70} gives for any matrix $(a_{ij})_{i,j} \in \C^{n\times n}$
\begin{equation*}
    \sup_{\substack{ \norm[]{ x }_\infty \le1 \\ \norm[]{ y }_\infty \le 1}}
        \abs[\Big]{
            \sum_{i,j = 1}^n t_{ij}^n a_{ij} y_i x_j
        }
    \le \log_2 ( 2n ) \cdot \sup_{\substack{ \norm[]{ x }_\infty \le1 \\ \norm[]{ y }_\infty \le 1}}
        \abs[\Big]{
            \sum_{i,j = 1}^n a_{ij} y_i x_j
        }
        \, ,
\end{equation*}
which is \eqref{eq:T_n_log}. Statement \eqref{eq:T_n_p} follows from Theorem~5.1 of \cite{B76}, which (implicitly) states that for $1 \le p < 2$
\begin{equation*}
        \sup_{\substack{ \norm[]{ x }_p \le1 \\ \norm[]{ y }_p \le 1}}
        \abs[\Big]{
            \sum_{i,j = 1}^n t_{ij}^n a_{ij} y_i x_j
        }
        \le \const_3(p) \cdot
            \sup_{\substack{ \norm[]{ x }_p \le1 \\ \norm[]{ y }_p \le 1}}
            \abs[\Big]{
                \sum_{i,j = 1}^n a_{ij} y_i x_j
            }
            \, .
\end{equation*}

For the proof of \eqref{eq:I_n} recall that by Theorem~4.3 of \cite{B77} and the duality $\ell_\infty^n = (\ell_1^n)'$ we have that
\begin{equation}
    \label{eq:thm4.3}
    \mu^2_{\scriptscriptstyle \norm[]{ \phdot }_\infty}(I_n)
    = \sup_{\substack{ d \in \C^n \\ \norm[]{ d }_\infty \le1}}
        \pi_1 \big( \ell_1^n \overset{\operatorname{diag} d}\longrightarrow
                    \ell_1^n \overset{I_n}\longrightarrow
                    \ell_\infty^n
            \big)
        \, ,
\end{equation}
where the $1$"~summing norm $\pi_1$ of an operator $T : X \to Y$ in finite dimensional spaces is defined as (see e.g. \cite{DJT95} or \cite{Defant_1993})
\begin{equation*}
    \pi_1(T) \ceq \sup \set[\bigg]{ 
        \sum_{k=1}^l \norm[]{ Tx_k }_Y
    \given
        l \in \N, x_k \in X, \sup_{\abs[]{ t_k } = 1} \norm[\Big]{ \sum_{k=1}^l t_k x_k }_X \le 1
    }
    \, .
\end{equation*}
By the ideal property of $\pi_1$ and the well-known fact that $\pi_1(\ell_1^n \overset{I_n}\longrightarrow \ell_\infty^n) = 1$ (see \cite[Section~22.4.12]{Pietsch_1980} or \cite[Section~10.4 and 11.1]{Defant_1993}) the right-hand side of \eqref{eq:thm4.3} equals $1$.

\begin{proof}[Proof of Lemma~\ref{lem:D^uv_T^uv}]
    We begin with the proof of \eqref{eq:D^uv} for the supremum norm $ \norm[]{ \phdot }_\infty$ on $\C^n$. Let $L : (\C^n)^m \to \C$ be a multilinear form. Without loss of generality we may assume $u=1$ and $v=2$. Then
    \begin{align*}
        \MoveEqLeft[4]
        \sup_{\norm[]{ x^{(k)} }_\infty \le 1}
            \abs[\big]{ D^{u,v} \schur L \big(x^{(1)}, \dotsc, x^{(m)} \big) }\\ 
        &= \sup_{\norm[]{ x^{(k)} }_\infty \le 1}
            \abs[\Big]{ \sum_{\bfi \in \I(n,m)} d_{i_1 i_2} c_\bfi(L) \, x^{(1)}_{i_1} \dotsm x^{(m)}_{i_m} }
            \\
        &= \sup_{\substack{
            x^{(3)}, \dotsc, x^{(m)} \\
            \norm[]{ x^{(k)} }_\infty \le 1
        }}
            \sup_{\substack{
            x^{(1)}, x^{(2)} \\
            \norm[]{ x^{(k)} }_\infty \le 1
        }}
            \abs[\Big]{
                \sum_{i,j=1}^n d_{i j} 
                    \bigg( \sum_{\substack{\bfi \in \I(n,m) \\ i_1 = i \\ i_2 = j}} c_\bfi(L) \, x^{(3)}_{i_3} \dotsm x^{(m)}_{i_m} \bigg)
                    x^{(1)}_i x^{(2)}_j
            } \, .
    \intertext{Using \eqref{eq:I_n}, we see that this is}
        &\le \sup_{\norm[]{ x^{(k)} }_\infty \le 1}
            \abs[\Big]{ \sum_{\bfi \in \I(n,m)} c_\bfi(L) \, x^{(1)}_{i_1} \dotsm x^{(m)}_{i_m} } \\
        &= \sup_{\norm[]{ x^{(k)} }_\infty \le 1}
            \abs[\big]{ L\big( x^{(1)},\dotsc, x^{(m)}) } 
            \, ,
    \end{align*}
    which proves $\mu^m_{\scriptscriptstyle \norm[]{ \phdot }_\infty}(D^{u,v}) = 1$. In a second step we now show that this inequality holds for any given $1$"~unconditional norm $ \norm[]{ \phdot }$ on $\C^n$. Again, let $L : (\C^n)^m \to \C$ be an $m$"~form and fix $x^{(1)}, \dotsc, x^{(m)} \in \C^n$ so that $\norm{ x^{(k)} } \le 1$. With $\tilde L : (\C^n)^m \to \C$ defined by
    \begin{equation*}
        \tilde L \big( y^{(1)}, \dotsc, y^{(m)} \big)
        \ceq L ( x^{(1)} \cdot y^{(1)}, \dotsc, x^{(m)} \cdot y^{(m)} )
        \, ,
    \end{equation*}
    where $x^{(k)} \cdot y^{(k)} \ceq (x^{(k)}_1 \cdot y^{(k)}_1, \dotsc, x^{(k)}_n \cdot y^{(k)}_n)$, we deduce from the first part of this proof that
    \begin{align*}
        \abs[\big]{ D^{u,v} \schur L \big( x^{(1)}, \dotsc, x^{(m)} \big) }
        & \le \sup_{\substack{
            y^{(1)}, \dotsc, y^{(m)} \\
            \norm[]{ y^{(k)} }_\infty \le 1
        }} \abs[\big]{ D^{u,v} \schur \tilde L \big( y^{(1)}, \dotsc, y^{(m)} \big) }
            \\
        \MoveEqLeft[4]
        \le \sup_{\substack{
            y^{(1)}, \dotsc, y^{(m)} \\
            \norm[]{ y^{(k)} }_\infty \le 1
        }} \abs[\big]{ \tilde L \big( y^{(1)}, \dotsc, y^{(m)} \big) }
        \le \sup_{\substack{
            y^{(1)}, \dotsc, y^{(m)} \\
            \norm[]{ y^{(k)} } \le 1
        }} \abs[\big]{ L \big( y^{(1)}, \dotsc, y^{(m)} \big) }
        \, ;
    \end{align*}
    note that the last inequality holds true due to the $1$"~unconditionality of $\norm{ \phdot }$.

    The proof of \eqref{eq:T^uv_log} follows the same lines using \eqref{eq:T_n_log} instead of \eqref{eq:I_n}. Finally, to prove \eqref{eq:T^uv_p} one only has to use the first step of the preceding argument with the norm $\norm{ \phdot }_\infty$ replaced by $\norm{ \phdot }_p$ and \eqref{eq:I_n} substituted by \eqref{eq:T_n_p}.
\end{proof}


\section{Proof of the Theorems~\ref{thm:mt_poly} and \ref{thm:mt_mlform}}

We are now ready to give the proofs of the Theorems~\ref{thm:mt_poly} and \ref{thm:mt_mlform}. We begin with  Theorem~\ref{thm:mt_mlform}, as Theorem~\ref{thm:mt_poly} will then follow easily.

\begin{proof}[Proof of Theorem~\ref{thm:mt_mlform}]
    Note at first that for any $1$"~unconditional norm $\norm{ \phdot }$ on $\C^n$ the \name{Schur} norm $\mu^m_{\scriptscriptstyle \norm[]{ \phdot }}$ turns the linear space $\C^{\I(n,m)}$ into an \name{Banach} algebra. By Lemma~\ref{lem:Ak_ingredients} and \eqref{eq:D^uv},
    \begin{align*}
        \mu^m_{\scriptscriptstyle \norm{\phdot}} (A^{u,k})
        \le \sum_{\substack{
                        Q \subset \set[]{ 1, \dotsc, k } \\
                        \abs[]{ Q } = u
                    }}
                \Big( \prod_{q\in Q}  \mu^m_{\scriptscriptstyle \norm{\phdot}} (D^{q,k}) \Big)
                \cdot
                \Big( \prod_{q\in Q^c} \underbrace{ \mu^m_{\scriptscriptstyle \norm{\phdot}} (\mathbf{1} - D^{q,k}) }_{\le \, 2} \Big)
                &
            \\
        \MoveEqLeft[10]
        \le \sum_{\substack{
                        Q \subset \set[]{ 1, \dotsc, k } \\
                        \abs[]{ Q } = u
                    }} 1^{\abs[]{ Q }} 2^{\abs[]{ Q^c }}
        = \binom{k}{u} 2^{k-u}
        \, ,
    \end{align*}
    and thus
    \begin{align*}
        \mu^m_{\scriptscriptstyle \norm{\phdot}}(\mathfrak{A}^k)
        &\le \mu^m_{\scriptscriptstyle \norm{\phdot}} \Big( \schur_{u=1}^{k-1} T^{u,k} \Big)
            \cdot \mu^m_{\scriptscriptstyle \norm{\phdot}} \bigg(
                \sum_{u=1}^{k} \frac{k}{u} \cdot
                A^{k,u}
            \bigg)
                \\
        \MoveEqRight[4]
        \le \big(
                \mu^m_{\scriptscriptstyle \norm{\phdot}} ( T^{u,k} )
            \big)^{k-1} \cdot k
                \sum_{u=1}^{k} \binom{k}{u} 2^{k-u}
        \le k 3^k \big(
                \mu^m_{\scriptscriptstyle \norm{\phdot}} ( T^{u,k} )
            \big)^{k-1}
        \, .
    \end{align*}
    Finally, the results in \eqref{eq:T^uv_log} and \eqref{eq:T^uv_p} complete the proof.
\end{proof}

We remark that the best constants $\const_1$ and $\const_2$ in Theorem~\ref{thm:mt_mlform} satisfy the estimates $(\const_1 \log n)^k \le k 3^k \big( \log_2 (2n) \big)^{k-1}$ and $\const_2^k \le k 3^k \const_3^{k-1}$ with $\const_3$ denoting the constant in \eqref{eq:T^uv_p}.

We finish with the proof of our main theorem.

\begin{proof}[Proof of Theorem~\ref{thm:mt_poly}]
    Repeated application of Theorem~\ref{thm:mt_mlform} yields 
    \begin{align*}
        \MoveEqLeft[4]
        \sup_{\norm{x^{(k)}}\le1}
            \abs[\big]{ L_P \big( x^{(1)}, \dotsc, x^{(m)} \big) }
        \\
        &= \sup_{\norm{x^{(k)}}\le1}
            \abs[\big]{ \Sym_1 L_P \big( x^{(1)}, \dotsc, x^{(m)} \big) }
        \\
        &\le \const^2 \cdot \sup_{\norm{x^{(k)}}\le1}
            \abs[\big]{ \Sym_2 L_P \big( x^{(1)}, \dotsc, x^{(m)} \big) }
        \\
        &\le \dotsb
        \\
        &\le \const^{2 + \dotsb + (m-1) + m}
            \cdot \sup_{\norm{x^{(k)}}\le1}
            \abs[\big]{ \Sym_m L_P \big( x^{(1)}, \dotsc, x^{(m)} \big) }
            \, ,
    \end{align*}
    with $\const$ denoting the respective constants of Theorem~\ref{thm:mt_mlform}. Finally, \eqref{eq:polarization_formula_norm_upper} (which is an immediate consequence of the polarization formula) completes the argument (note that by definition $\Sym L_P = \Sym_m L_P$).
\end{proof}


\end{document}